\theoremstyle{plain}
\numberwithin{equation}{section}
\newtheorem{theorem}{Theorem}[section]
\newtheorem{lemma}[theorem]{Lemma}
\newtheorem{proposition}[theorem]{Proposition}
\newtheorem{corollary}[theorem]{Corollary}
\theoremstyle{remark}
\newcommand{\lam}{\lambda}
\newcommand{\R}{{\mathbb R}}
\newcommand{\Vol}{\text{Vol}}
\newcommand{\cal}{\mathcal}
\begin{document}
\title{On existence of the prescribing $k$-curvature of the Einstein tensor}
\author{Leyang Bo}
\address{Leyang Bo: School of Mathematical Sciences, Zhejiang University, Hangzhou 310027, China.}
\email{bo\underline{ }leyang@126.com}

\author{Weimin Sheng}
\address{Weimin Sheng: School of Mathematical Sciences, Zhejiang University, Hangzhou 310027, China.}
\email{weimins@zju.edu.cn}

\thanks{The authors were supported by NSFC, grant no. 11571304.}
\keywords{Einstein tensor, $k$-curvature, existence, compactness}
\subjclass[2010]{53C21, 35J60 }

\maketitle
\begin{abstract}
In this paper, we study the problem of conformally deforming a metric on a $3$-dimensional manifold $M^3$ such that its $k$-curvature equals to a prescribed function, where the $k$-curvature is defined by the $k$-th elementary symmetric function of the eigenvalues of the Einstein tensor, $1\le k\le 3$. We prove the solvability of the problem and the compactness of the solution sets on manifolds when $k=2$ and $3$, provided the conformal class admits a negative $k$-admissible metric with respect to the Einstein tensor.
\end{abstract}
\baselineskip16pt
\parskip3pt

\section{Introduction}
Let $(M^n,g)$ be an $n$-dimensional compact Riemannian manifold with or without boundary, $n\ge 3$. Let $Ric$ and $R$ be the Ricci tensor and the scalar curvature, respectively. Then the Einstein tensor is defined by
\begin{equation*}
 E_g=\frac{1}{n-2}(Ric-\dfrac{R}{2}g),
 \end{equation*}
which can be also viewed as a special case of the following modified Schouten tensor with a parameter $\tau$ that was introduced by Gursky and Viaclovsky \cite{GV1}, and A. Li and Y.-Y. Li \cite{LL2} independently:
 \begin{equation*}
 A_g^\tau=\frac{1}{n-2}(Ric-\dfrac{\tau R}{2(n-1)}g),
 \end{equation*}
where $\tau\in \R$ and the Einstein tensor is just the case $\tau=n-1$.

Einstein tensor plays a key role in general relativity, and was extensively studied by many researchers. In this paper, we focus on its property in the conformal class.

Let $\lam(A_g^\tau)=(\lam_1,\cdots,\lam_n)$ denote the eigenvalues of $A_g^\tau$ with respect to $g$. We also define the $k$-curvature of $\lam(A_g^\tau)$ as
\[
\sigma_k(\lam)=\Sigma_{i_1<\cdots<i_k}\lam_{i_1}\cdots\lam_{i_k},
\]
the $k$-th elementary symmetric polynomial, and
\[
\Gamma_k^+=\{\lam\in \R^n\, |\, \sigma_j(\lam)>0\,\,  {\text{for}}\, \,  j=1, \cdots, k\}
\]
the corresponding open, convex cone in $\R^n$.

Define $\Gamma^-_k=\{\lam \in \R^n|-\lam\in \Gamma_k^+\}$.  We call $g$ is $k$-admissible if $\lam(g^{-1}A_g^\tau)\in\Gamma^+_k$ or negative $k$-admissible if $\lam(g^{-1}A_g^\tau)\in\Gamma^-_k$.

With the conformal transformation $\tilde g=u^{\frac{4}{n-2}}g$, the modified Schouten tensor changes by
\[
A_{\tilde g}^\tau=-\dfrac{2}{n-2}u^{-1}\nabla^2 u-\dfrac{2(1-\tau)}{(n-2)^2}u^{-1}(\Delta u)g+\dfrac{2n}{(n-2)^2}u^{-2}du\otimes du-\dfrac{2}{(n-2)^2}u^{-2}|\nabla u|^2g+A_g^\tau.
\]
When $\tau=n-1$, we have
\[
E_{\tilde g}=-\dfrac{2}{n-2}u^{-1}\nabla^2 u+\dfrac{2}{(n-2)}u^{-1}(\Delta u)g+\dfrac{2n}{(n-2)^2}u^{-2}du\otimes du-\dfrac{2}{(n-2)^2}u^{-2}|\nabla u|^2g+E_g.
\]

If $\tilde g=e^{-2u}g$, we have
\[
A_{\tilde g}^\tau= A^\tau_g+\nabla^2 u+\frac{1-\tau}{n-2}\Delta u+du\otimes du-\frac{2-\tau}{2}|\nabla u|^2g.
\]
In particular, when $\tau=n-1$, we have
\[
E_{\tilde g}=E_g+\nabla^2 u-\Delta u+du\otimes du+\frac{n-3}{2}|\nabla u|^2g.
\]

Similar to the $k$-Yamabe problem, the $k$-curvature of the Einstein tensor is defined by
\begin{equation}
\sigma_k(E_g)=\sigma_k(\lam(g^{-1}E_g)).
\end{equation}
It is natural to ask: can we find a $k$-admissible (or negative $k$-admissible) metric in the conformal class of $g$ with constant $k$-curvature of the Einstein tensor?  This problem is equivalent to find a solution to  the following equation:
\begin{equation}
\sigma_k(E_g+\nabla^2 u-\Delta u+du\otimes du+\frac{n-3}{2}|\nabla u|^2g)=1.
\end{equation}
or
\begin{equation}\label{Einstein}
\sigma_k(-\nabla^2 u+\Delta u-du\otimes du-\frac{n-3}{2}|\nabla u|^2g-E_g)=1.
\end{equation}

The equation \eqref{Einstein} is elliptic, which in some sense corresponds to the $k$-Yamabe equation in the positive cone, i.e
\begin{equation}\label{k-Yamabe}
\sigma_k(A^\tau_g+\nabla^2 u+\frac{1-\tau}{n-2}\Delta u+du\otimes du-\frac{2-\tau}{2}|\nabla u|^2g)=1
\end{equation}
with $\tau=1$.

The $k$-Yamabe problem have been extensively studied in the last decade. When $k=1$, equations \eqref{k-Yamabe} and \eqref{Einstein} become to the classical Yamabe equation. The existence of its solution has been solved by Yamabe \cite{Y}, Trudinger \cite{T}, Aubin \cite{A} and Schoen \cite{S}. The answer for the compactness of the solution set is positive when the dimension $n\le 24$ and negative when $n\ge 25$ (\cite{KMS, BM}). When $\tau=1$, it was initially studied by Viaclovsky \cite{V1}.
The existence of the solution to the equation was solved for $k=2$, $n\ge 4$ in \cite{CGY1, STW}, for $k\ge \frac{n}{2}$ by \cite{GV2, TW1, TW2}, for $2< k<n/ 2$ and $(M, g)$ being locally conformally flat case by \cite{GW1, LL1}. The compactness of the solution set was proved for $k=2$, $n=4$ by \cite{CGY1}, for $k\ge \frac{n}{2}$ by \cite{GV2, TW1, TW2, LN}, and for $2< k<n/ 2$ and $(M, g)$ being locally conformally flat case by \cite{ LL2}.

For general $\tau$, the case $\tau\le 1$ and $\tau\ge n-1$ is somehow meaningful since in these cases the equation is elliptic, for $\tau\neq 1$, equation \eqref{k-Yamabe} usually does not have the variational structure even the manifold is locally conformally flat, which requires some other ways to approach this problem. In \cite{LS, SZ}, the authors studied these cases and they give a positive answer to the problems on existence and compactness for $\tau<1$ in the negative cone and $\tau>n-1$ in the positive cone using a parabolic flow argument and proved the priori estimates are exponentially decayed. In \cite{SZ}, the authors also give a positive answer to long time existence of the flow for $\tau<1$ in the positive cone and $\tau>n-1$ in the negative cone on locally conformally flat manifolds. But the convergence of the flow is still unknown even on locally conformally flat manifold.

In this paper we mainly study the $k$-curvature equation of Einstein tensor ($\tau=n-1$) in the negative cone. This corresponds to the $k$-curvature of the Schouten tensor being in the positive cone.
We only deal with the three dimensional case since the admissibility of the Einstein tensor in the cases of $k=2$ and $3$ imply the non-negativity of Ricci tensor. By use of the idea in \cite{LN} and \cite{GV2} we can get the $C^0$ estimate of the solutions. For the higher dimensional case,   the non-negativity of Ricci tensor can not be obtained from this point.

Our main result is
\begin{theorem}\label{main thm}
Let $(M^3, g)$ be a $3$-dimensional closed Riemannian manifold and $k=2$, or $k=3$. Assume
\begin{enumerate}
  \item[(1)] $g$ is negative $k$-admissible with respect to the Einstein tensor, and
  \item[(2)] $(M^3, g)$ is not conformally equivalent to the standard sphere.
\end{enumerate}
Then for any given smooth positive function $h\in C^\infty(M)$, there exist a solution $u\in C^\infty(M)$ of
$\sigma_k(-\lam(E_{\tilde g}))=h(x)$, where $\tilde{g}=e^{-2u}g$,
and the set of all such solutions is compact in the $C^m$-topology for any $m>0$.
\end{theorem}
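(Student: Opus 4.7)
I would attack existence and compactness in parallel, via uniform a priori estimates combined with Leray--Schauder degree theory in the spirit of Li--Li. In dimension three with $\tilde g=e^{-2u}g$, the equation $\sigma_k(-\lambda(E_{\tilde g}))=h$ reduces to
\begin{equation*}
\sigma_k\bigl(-E_g-\nabla^2 u+(\Delta u)g-du\otimes du\bigr)=h(x)\,e^{-2ku},
\end{equation*}
an elliptic, concave equation for $u$ in the negative $k$-admissible class. I would study it along the homotopy $h_t=(1-t)\,\sigma_k(-E_g)+th$, for which $u\equiv 0$ solves the $t=0$ problem and condition (1) ensures $h_t>0$ throughout. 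Because the Einstein tensor equation ($\tau=n-1\neq 1$) has no variational structure, degree theory rather than a min--max argument is the natural vehicle.

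\smallskip

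\textbf{A priori estimates.} The geometric input is that in dimension three, $-\lambda(E_g)\in\Gamma_k^+$ for $k=2,3$ forces $\mathrm{Ric}_g\ge 0$, as the authors observe: since the eigenvalues of $-E_g$ are $\tfrac{R}{2}-\lambda_i$ with $\lambda_i$ the Ricci eigenvalues, a direct computation gives $\sigma_1(-\lambda(E_g))=\tfrac{R}{2}$ and $\sigma_2(-\lambda(E_g))=\sigma_2(\mathrm{Ric}_g)-\tfrac{R^2}{4}$, and positivity of these precludes any negative Ricci eigenvalue. The $C^0$ bound splits in two. One side follows by the maximum principle applied at an extremum of $u$, using the equation and $\mathrm{Ric}_g\ge 0$. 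The other side requires a blow-up/Liouville analysis in the style of Li--Nguyen and Gonzalez--Viaclovsky: a rescaled blow-up limit is an entire solution of $\sigma_k(-E)=\mathrm{const}$ on $\R^3$ in the negative cone, which the Liouville classification (together with hypothesis (2), used to exclude the spherical profile) rules out. The $C^1$ bound comes from a Bernstein test function of the form $|\nabla u|^2 e^{\alpha u}$ together with concavity of $\sigma_k^{1/k}$; the $C^2$ bound from a Guan--Wang--Viaclovsky type test function built from $\log$ of the largest Hessian eigenvalue plus auxiliary terms in $|\nabla u|^2$ and $u$. With $\|u\|_{C^2}$ bounded the equation is uniformly elliptic and concave, so Evans--Krylov delivers $C^{2,\alpha}$ and Schauder bootstraps to $C^\infty$; this simultaneously yields compactness in $C^m$.

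\smallskip

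\textbf{Degree and main obstacle.} With uniform estimates along the homotopy, the Li--Li degree is well-defined and homotopy-invariant on a suitable open set of $C^{4,\alpha}(M)$. At $t=0$ the admissible solution $u\equiv 0$ is nondegenerate: the linearization $L[v]=\sigma_k'(-E_g)[-\nabla^2 v+(\Delta v)g]+2k\,\sigma_k(-E_g)\,v$ is uniformly elliptic with strictly positive zeroth-order term, so the maximum principle forces $\ker L=0$; the degree is then $\pm 1$ and homotopy invariance produces a solution at $t=1$. The main obstacle, I expect, is the $C^0$ upper bound, specifically establishing the requisite Liouville-type theorem for $\sigma_k(-E)=\mathrm{const}$ on $\R^3$ in the negative cone and using hypothesis (2) to rule out spherical bubbles. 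The $C^1$ and $C^2$ estimates, by contrast, are technical but essentially routine once $\mathrm{Ric}_g\ge 0$ is exploited.
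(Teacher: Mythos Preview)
Your overall strategy---a priori estimates plus degree theory along a homotopy, with the $C^0$ upper bound as the crux---matches the paper's, and your identification of $\mathrm{Ric}_g\ge 0$ from negative $2$-admissibility in dimension three is correct (the paper simply notes $T_1(-E_g)=\mathrm{Ric}_g$). The $C^1$ and $C^2$ estimates are indeed routine; the paper imports them from He--Sheng rather than rederiving them.

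Two points are off, however. First, the claim that one side of the $C^0$ bound follows from the maximum principle using $\mathrm{Ric}\ge 0$ does not work: at an extremum of $u$ the sign of $-\nabla^2 u+(\Delta u)g$ goes the wrong way for a direct bound. The paper instead shows that an \emph{upper} bound on $u$ implies a lower bound, via the Harnack inequality coming from the $C^1$ estimate together with a comparison of two solutions at the maximum of their ratio; $\mathrm{Ric}\ge 0$ plays no role at this stage.

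Second, and more substantively, your blow-up sketch elides the mechanism by which hypothesis~(2) enters. The Liouville theorem does not ``rule out'' the rescaled limit; it \emph{identifies} it as the standard bubble. Reaching a contradiction requires several further steps that the paper carries out in detail: a volume-concentration lemma (using a Riemannian version of Hawking's singularity theorem) to isolate a unique blow-up point and establish the rate $u_i\le C\,d_g(\cdot,x_i)^{-(n-2)/2}$; convergence of $v_i=u_i/u_i(p)$ to a singular $v_\infty$ on $M\setminus\{x_\infty\}$ with $-E_{v_\infty^{4/(n-2)}g}\in\partial\Gamma_k^+$ in the viscosity sense; a barrier $v_\delta=r^{-(1-2\delta)}e^r$ combined with radial symmetry of degenerate solutions on $\mathbb{R}^n\setminus\{0\}$ to pin down the precise asymptotic $v_\infty\sim d_g(\cdot,x_\infty)^{2-n}$; and finally Bishop--Gromov volume comparison---which is where $\mathrm{Ric}\ge 0$ is actually spent---to conclude that $(M\setminus\{x_\infty\},v_\infty^{4/(n-2)}g)$ is isometric to flat $\mathbb{R}^n$, hence $(M,g)$ is conformal to the round sphere, contradicting~(2). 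This chain is the real content of the proof and is exactly why the result is confined to $n=3$, $k\in\{2,3\}$; your proposal places the $\mathrm{Ric}\ge 0$ input in the wrong step and treats the hardest part as a one-line invocation of Liouville.
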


The rest of the paper is organized as follows. In section 2 we introduce the Liouville theorem and a Riemannian version of Hawking's singularity theorem in relativity. In section 3 we give the deformation of equation \eqref{Einstein}. In section 4 we give some local estimates of the solutions to the deformation equation, and then by contradiction argument, we show that there is at most one blow-up point in the manifold and establish the explicit blow-up speed around the blow-up point. Then using the Bishop-Gromov volume comparison theorem we get the contradiction, and finish the proof of Theorem \ref{main thm}.

\section{Preliminary}
In this section, we first introduce some basic properties of the elementary symmetric functions and Newton transformation.

The $k$-th Newton transformation associated with a real symmetric matrix $A$ is defined as follows:
$$T_k(A)=\sigma_k(A)I-\sigma_{k-1}(A)A+\cdots+(-1)^kA^k.$$
and we have
$$T_k(A)^i_j=\dfrac{1}{k!}\delta^{i_1\cdots i_k i}_{j_1\cdots j_k j}A_{i_1 j_1}\cdots A_{i_k j_k},$$
where $\delta^{i_1\cdots i_k i}_{j_1\cdots j_k j}$ is the generalized Kronecker delta symbol.
Note $\sigma_k(A)=\dfrac{1}{k!}\delta^{i_1\cdots i_k}_{j_1\cdots j_k}A_{i_1 j_1}\cdots A_{i_k j_k}$, we have
$T_{k-1}(A)^i_j=\dfrac{\partial \sigma_k(A)}{\partial A_{ij}}$.

Sometimes we also use the eigenvalue version of the $k$-th elementary symmetric function,
$\lam(A)=(\lam_1,\cdots,\lam_n)$ and $\sigma_k(\lam)=\Sigma_{i_1<\cdots<i_k}\lam_{i_1}\cdots\lam_{i_k}$.
We write $\sigma_0(\lam)=1$, $\sigma_k(\lam)=0$ for $k>n$, and denote $\sigma_{k;i}(\lam)=\sigma_k(\lam)|_{\lam_i=0}$, $\sigma_{k;ij}(\lam)=\sigma_k(\lam)|_{\lam_i,\lam_j=0}$.
We also denote $\partial_i \sigma_k=\dfrac{\partial\sigma_k}{\partial \lam_i}$, $\partial_i \partial_j \sigma_k(\lam)=\dfrac{\partial^2\sigma_k}{\partial \lam_i\partial \lam_j}$. Then $\partial_i \sigma_k(\lam)=\sigma_{k-1;i}(\lam)$, $\partial_i \partial_j \sigma_k(\lam)=\sigma_{k-2;ij}(\lam)$.

Now we introduce some basic properties for the $k$-th elementary symmetric function and Newton transformations which will be used frequently in this paper. One can find these facts in many literature, see \cite{STW} for example:
\begin{proposition}\label{basic property}
Let $\lam(A)\in \Gamma_k^+$ with $\lam_1\ge\cdots\ge\lam_n$. Then
\begin{description}
  \item[(i)] $T_{k-1}(A)^i_jA^j_i=k\sigma_k(A)$, or equivalently, $\sum_i\lam_i\sigma_{k-1;i}(\lam)=k\sigma_k(\lam)$,
  \item[(ii)] $trT_{k-1}(A)=(n-k+1)\sigma_{k-1}(A)$, or equivalently, $\sum_i\sigma_{k-1;i}(\lam)=(n-k+1)\sigma_{k-1}(\lam)$,
  \item[(iii)] $\sigma_k(\lam)=\lam_i\sigma_{k-1;i}(\lam)+\sigma_{k;i}(\lam)$, for each $i=1, \cdots, n$.
  \item[(iv)] $T_{k-1}(A)$ is positive definite, or equivalently, $\sigma_{k-1;n}\ge\cdots\ge\sigma_{k-1;1}>0$,
  we also have $\sigma_{k-2;ij}>0$ for each $i\neq j$.
\end{description}
\end{proposition}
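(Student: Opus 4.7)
The plan is to treat the four items in order of increasing depth. Identity (iii) is a direct consequence of the definition: expanding $\sigma_k(\lambda) = \sum_{i_1 < \cdots < i_k} \lambda_{i_1} \cdots \lambda_{i_k}$ and partitioning the multi-indices according to whether the fixed index $i$ appears among $i_1, \ldots, i_k$ splits the sum exactly into $\lambda_i \sigma_{k-1;i}(\lambda)$ and $\sigma_{k;i}(\lambda)$. Since items (i) and (ii) are stated both in matrix and eigenvalue form, I would diagonalize $A$ at the point of interest (or work in an orthonormal frame of eigenvectors) to reduce the tensorial identities to the scalar ones.

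For (ii) I would use a counting argument: each monomial in $\sigma_{k-1}(\lambda)$ involves exactly $k-1$ distinct indices, so it is present in $\sigma_{k-1;i}(\lambda)$ for precisely the $n - (k-1) = n-k+1$ values of $i$ outside that index set, and summing over $i$ delivers the factor $n-k+1$. For (i) the quickest route is Euler's identity for the homogeneous polynomial $\sigma_k$ of degree $k$, namely $\sum_i \lambda_i \partial_{\lambda_i}\sigma_k = k\sigma_k$, combined with the derivative formula $\partial_{\lambda_i}\sigma_k = \sigma_{k-1;i}$ already recorded in the excerpt. Alternatively, summing (iii) over $i$ and using the analogous counting identity $\sum_i \sigma_{k;i} = (n-k)\sigma_k$ rearranges to the same conclusion.

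Item (iv) is the only statement that genuinely uses the cone hypothesis $\lambda \in \Gamma_k^+$ and is the main obstacle. The positivity of $\sigma_{k-1;i}$ and $\sigma_{k-2;ij}$ on $\Gamma_k^+$ is essentially Garding's inequality; I would prove it by induction on $k$ using the stability property that deleting one entry sends $\Gamma_k^+$ into the lower-dimensional cone $\Gamma_{k-1}^+$, which itself follows from the Newton--Maclaurin interlacing for the univariate polynomial $t \mapsto \sigma_k(\lambda + t e_i)$ together with the convexity of $\Gamma_k^+$. Granting positivity, I would then derive the ordering $0 < \sigma_{k-1;1} \le \cdots \le \sigma_{k-1;n}$ by applying (iii) to $\sigma_{k-1}$ in the remaining variable, which yields the useful telescoping identity $\sigma_{k-1;i} - \sigma_{k-1;j} = (\lambda_j - \lambda_i)\sigma_{k-2;ij}$; combined with $\lambda_1 \ge \cdots \ge \lambda_n$ and $\sigma_{k-2;ij} > 0$ this produces the chain of inequalities. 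Positive-definiteness of $T_{k-1}(A)$ then follows by diagonalizing $A$ in an orthonormal eigenbasis, which reduces $T_{k-1}(A)$ to the diagonal matrix with entries $\sigma_{k-1;i}(\lambda)$, just shown to be strictly positive. The most technical point is the induction step establishing the cone truncation property, since it cannot be extracted from the purely algebraic identities (i)--(iii) and requires the qualitative geometry of $\Gamma_k^+$.
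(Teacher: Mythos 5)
The paper does not supply a proof of Proposition~\ref{basic property}; it simply remarks that these facts are standard and refers the reader to \cite{STW}. So there is no in-paper argument to compare against, and the question is only whether your self-contained proof is sound.

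For items (i)--(iii) your arguments are correct and complete: (iii) is the partition of the monomials of $\sigma_k$ according to whether the fixed index $i$ occurs; (ii) is the double-counting argument showing each monomial of $\sigma_{k-1}$ survives deletion of exactly $n-k+1$ indices; and (i) follows either from Euler's identity for the degree-$k$ homogeneous $\sigma_k$ together with $\partial_{\lam_i}\sigma_k=\sigma_{k-1;i}$, or from summing (iii) over $i$ and using $\sum_i\sigma_{k;i}=(n-k)\sigma_k$. The reduction of the matrix statements to the eigenvalue statements by diagonalizing in an orthonormal eigenbasis, where $T_{k-1}(A)=\operatorname{diag}(\sigma_{k-1;i}(\lam))$, is also standard and correct.

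For item (iv) the overall plan is the right one: establish $\sigma_{k-1;i}(\lam)>0$ and $\sigma_{k-2;ij}(\lam)>0$ for $\lam\in\Gamma_k^+$, and then obtain the monotonicity chain $\sigma_{k-1;1}\le\cdots\le\sigma_{k-1;n}$ from the telescoping identity $\sigma_{k-1;i}-\sigma_{k-1;j}=(\lam_j-\lam_i)\,\sigma_{k-2;ij}$, which you derive correctly from (iii). The soft spot is the justification of the ``cone truncation'' lemma (that deleting an entry sends $\Gamma_k^+(\R^n)$ into $\Gamma_{k-1}^+(\R^{n-1})$, equivalently that the derivatives $\sigma_{k-1;i}$ are strictly positive on $\Gamma_k^+$). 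Your sketch invokes ``Newton--Maclaurin interlacing for the univariate polynomial $t\mapsto\sigma_k(\lam+te_i)$,'' but that map is affine in $t$, namely $\sigma_{k;i}(\lam)+t\,\sigma_{k-1;i}(\lam)$, so there is no interlacing phenomenon to exploit. Letting $t\to\infty$ along the ray $\lam+te_i\subset\Gamma_k^+$ only yields the non-strict inequality $\sigma_{k-1;i}(\lam)\ge 0$, and upgrading it to strict positivity is exactly the nontrivial content; this is usually obtained either from G{\aa}rding's theory of hyperbolic polynomials (the hyperbolicity cone of a directional derivative contains that of the polynomial) or from the Caffarelli--Nirenberg--Spruck style double induction on $k$ and $n$. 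Filling in that one step would make (iv) complete; the rest of the argument stands.
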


We also have the following:
\begin{corollary}
Let $(M,g)$ be a $3$-dimensional Riemannian manifold and $x\in M$. $-E_g$ is 2-admissible at $x$, then
 $$Ric_g(x)\ge 0.$$
\end{corollary}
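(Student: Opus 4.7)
The plan is to reduce this to a direct algebraic statement about the eigenvalues and then invoke Proposition \ref{basic property}(iv). First I would compute the relation between the eigenvalues of $E_g$ and those of $Ric_g$ at the point $x$. Since $n=3$, the prefactor $\frac{1}{n-2}=1$, so $-E_g=-Ric_g+\tfrac{R}{2}g$. Diagonalizing $Ric_g$ at $x$ with eigenvalues $\mu_1,\mu_2,\mu_3$ (so $R=\mu_1+\mu_2+\mu_3$), the eigenvalues of $-E_g$ with respect to $g$ are $\lambda_i=\tfrac{R}{2}-\mu_i$. Solving this linear system gives the clean identity
\[
\mu_i=\lambda_j+\lambda_k=\sigma_{1;i}(\lambda),\qquad \{i,j,k\}=\{1,2,3\}.
\]

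Second, by hypothesis $\lambda=\lambda(-g^{-1}E_g)(x)\in\Gamma_2^+$, so applying Proposition \ref{basic property}(iv) with $k=2$ and $n=3$ yields $\sigma_{1;i}(\lambda)>0$ for every $i$. Combining with the identity above gives $\mu_i>0$ for all $i$, hence $Ric_g(x)>0$, which is stronger than the claim.

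I expect no serious obstacle here: the proof is essentially one algebraic identity (expressing the eigenvalues of $Ric_g$ in dimension three as the $\sigma_{1;i}$'s of the eigenvalues of $-E_g$) plus a direct appeal to the general positivity property of $T_{k-1}$ on $\Gamma_k^+$. The only thing worth emphasizing in the write-up is that this clean duality is very special to $n=3$; for $n\ge 4$ the relation $\mu_i$ vs.\ $\lambda_i$ involves a genuinely different linear combination, which is consistent with the authors' remark that in higher dimensions the non-negativity of $Ric$ cannot be extracted from $2$-admissibility of $-E_g$.
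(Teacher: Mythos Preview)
Your proof is correct and is essentially the same argument as the paper's, just phrased at the level of eigenvalues rather than tensors: the paper observes in one line that $T_1(-E_g)=\sigma_1(-E_g)\,g+E_g=Ric_g$ in dimension $3$ and then invokes Proposition \ref{basic property}(iv), which is exactly your identity $\mu_i=\sigma_{1;i}(\lambda)$ read off after diagonalization. Both yield the slightly stronger conclusion $Ric_g(x)>0$.
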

\begin{proof}
A direct calculation gives that $T_1(-E_g)=Ric_g$, by Proposition \ref{basic property}(iv), the Ricci tensor is positive definite.
\end{proof}

Now consider the equation
\begin{equation}\label{modi exp Yamabe}
\sigma_k(\lambda(-E_{g_u}))=u^{p-(n+2)/(n-2)}
\end{equation}
in the case $-\infty<p\le (n+2)/(n-2)$ and $n\ge 3$.
We have a Liouville type theorem:
\begin{theorem}(Theorem 1.1 of \cite{LL2})\label{Liouville}
For $n\ge 3$,  assume that $u\in C^2(\mathbb{R}^n)$ is a superharmonic solution of \eqref{modi exp Yamabe} for some $-\infty<p\le (n+2)/(n-2)$. Then either $u\equiv \text{constant}$ or $p=(n+2)/(n-2)$
and
\begin{equation}
u(x)\equiv \left(\dfrac{a}{1+cb^2|x-\bar x|^2}\right)^{\frac{n-2}{2}}, \, \, x\in \mathbb{R}^n
\end{equation}
for some fixed $\bar x\in\mathbb{R}^n$ and for some positive constants $a$ and $b$ satisfying $\sigma_k(\lambda(2b^2a^{-2}I))=1$, where $c=\frac{(n-2)}{(-2n+2+n\tau)}$.
\end{theorem}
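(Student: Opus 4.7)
My plan is to prove the statement by the \emph{method of moving spheres}, developed by Y.Y.~Li and coauthors for conformally invariant fully nonlinear elliptic equations. The key structural fact is the conformal covariance of the operator $u\mapsto\sigma_k(\lam(-E_{g_u}))$ with $g_u=u^{4/(n-2)}g_{\mathrm{flat}}$: letting $\Phi_{x_0,\lam}(y)=x_0+\lam^2(y-x_0)/|y-x_0|^2$ denote inversion about the sphere of radius $\lam$ centered at $x_0\in\R^n$, the Kelvin-type transform
\[
u_{x_0,\lam}(y)=\Bigl(\frac{\lam}{|y-x_0|}\Bigr)^{n-2}\,u(\Phi_{x_0,\lam}(y))
\]
satisfies on $\R^n\setminus\overline{B_\lam(x_0)}$ an identity of the form
\[
\sigma_k(\lam(-E_{g_{u_{x_0,\lam}}}))(y)=\Bigl(\frac{u(\Phi_{x_0,\lam}(y))}{u_{x_0,\lam}(y)}\Bigr)^{p-(n+2)/(n-2)}\,u_{x_0,\lam}(y)^{p-(n+2)/(n-2)}.
\]
The multiplicative prefactor equals $1$ precisely when $p=(n+2)/(n-2)$, and for $p<(n+2)/(n-2)$ its sign is exactly the one needed to compare $u$ and $u_{x_0,\lam}$.

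First I would use the positivity, smoothness, and superharmonicity of $u$ (which rules out fast decay of $u$ at any interior point) to establish the \emph{starting inequality}: for every $x_0\in\R^n$ there exists $\lam_0(x_0)>0$ such that $u_{x_0,\lam}\le u$ on $\{|y-x_0|\ge\lam\}$ for all $0<\lam\le\lam_0$. Then I would set
\[
\bar\lam(x_0)=\sup\{\mu>0:\ u_{x_0,\lam}\le u\ \text{on}\ |y-x_0|\ge\lam\ \text{for all}\ 0<\lam\le\mu\}.
\]
The heart of the proof is a dichotomy. If $\bar\lam(x_0)=+\infty$ for every $x_0$, a standard calculus lemma (\`a la Li--Zhu / Li--Zhang) forces $u$ to be constant. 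Otherwise $\bar\lam(x_0)<\infty$ for some $x_0$, and then $u$ and $u_{x_0,\bar\lam}$ touch along $\partial B_{\bar\lam}(x_0)$ while the exterior inequality persists. Proposition~\ref{basic property}(iv) yields $T_{k-1}>0$ along the admissible interpolation between the two argument matrices, so the linearization of $\sigma_k$ is uniformly elliptic and both the strong maximum principle and the Hopf boundary lemma apply, forcing $u\equiv u_{x_0,\bar\lam}$ in the exterior (hence everywhere by unique continuation). Invariance of a positive solution under a nontrivial spherical inversion is compatible with the equation only when the two sides scale identically, which happens exactly at $p=(n+2)/(n-2)$, and it then forces radial symmetry about some $\bar x\in\R^n$. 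Substituting the radial ansatz into $\sigma_k(\lam(-E_{g_u}))=1$ and diagonalizing (a conformally flat radial metric has Einstein tensor with two distinct eigenvalues, one of multiplicity $n-1$) reduces the equation to the explicit bubble profile in the statement, with the algebraic constraint $\sigma_k(2b^2a^{-2}I)=1$ fixing the relation between $a$ and $b$.

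The main obstacle, I expect, is the strong-maximum-principle / Hopf-lemma step at the critical radius: although both matrices compared lie in $\Gamma_k^+$, one must interpolate within the admissible cone and verify uniform ellipticity of the resulting linear operator in order to invoke Serrin-type rigidity. A secondary but still delicate point is the precise derivation of the Kelvin-transform identity for the Einstein tensor by unwinding the conformal change formula
\[
E_{\tilde g}=-\tfrac{2}{n-2}u^{-1}\nabla^2 u+\tfrac{2}{n-2}u^{-1}(\Delta u)g+\tfrac{2n}{(n-2)^2}u^{-2}du\otimes du-\tfrac{2}{(n-2)^2}u^{-2}|\nabla u|^2g+E_g
\]
recorded in the introduction, and confirming that $(n+2)/(n-2)$ is indeed the critical conformal weight of this operator in dimension $n$.
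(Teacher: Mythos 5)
The paper itself gives no proof of this theorem: it is invoked by citing Theorem 1.3 of Li--Li \cite{LL2}, a Liouville theorem for a general class of conformally invariant fully nonlinear operators $f(\lambda(\cdot))$, with the remark that $\sigma_k$ satisfies the hypotheses on $f$ and that constants must be adjusted for the $\tau=n-1$ case; the actual argument is explicitly omitted. Your moving-spheres sketch is therefore not an alternative route but an outline of precisely the argument that \cite{LL2} carries out and that the present paper delegates to the reference. As a plan it is sound, and the two difficulties you flag --- uniform ellipticity of the linearized operator when invoking the strong maximum principle and Hopf lemma at the critical radius, and the precise Kelvin-transform covariance of the Einstein-tensor operator --- are indeed where the substance lies; for the former, the convexity of $\Gamma_k^+$ together with Proposition~\ref{basic property}(iv) guarantees $T_{k-1}>0$ along the interpolation, so the Serrin-type rigidity you want is available. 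One small repair: your displayed Kelvin identity is a tautology as written, since $\bigl(u\circ\Phi_{x_0,\lambda}/u_{x_0,\lambda}\bigr)^{p-(n+2)/(n-2)}\cdot u_{x_0,\lambda}^{\,p-(n+2)/(n-2)}$ just reconstitutes $(u\circ\Phi_{x_0,\lambda})^{p-(n+2)/(n-2)}$, which equals the left side by naturality of $E$ but does not yet exhibit the comparison mechanism. The form you actually need is $\sigma_k\bigl(\lambda(-E_{g_{u_{x_0,\lambda}}})\bigr)(y)=\bigl(|y-x_0|/\lambda\bigr)^{(n-2)p-(n+2)}\,u_{x_0,\lambda}(y)^{p-(n+2)/(n-2)}$, whose prefactor is at most $1$ on $\{|y-x_0|\ge\lambda\}$ whenever $p\le(n+2)/(n-2)$; this sign is what lets you push $\bar\lambda(x_0)$ upward and is also what forces $p=(n+2)/(n-2)$ once some $\bar\lambda(x_0)$ is finite. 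Likewise, the starting inequality at infinity should explicitly use superharmonicity to get $\liminf_{|y|\to\infty}|y|^{n-2}u(y)>0$, or it does not hold. Finally, the explicit constant $c=(n-2)/(-2n+2+n\tau)$ in the statement (which is $1/(n-1)$ at $\tau=n-1$) is exactly the ``constant adjustment'' the paper alludes to; it falls out when you substitute the radial ansatz into the $\tau$-dependent conformal transformation formula for $A^\tau_g$ rather than the ordinary Schouten tensor, so keep the $\tau$-dependence explicit in that computation.
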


Modifying some constants in Theorem 1.3 of \cite{LL2}, and noticing that $\sigma_k$ satisfies the condition of $f$ in the theorem, we can prove Theorem \ref{Liouville}. We omit the proof here.

Next we introduce a Riemmanian version of Hawking's singularity theorem which will play an important role in the blow-up analysis.

\begin{proposition}(see \cite{LN} or \cite{book1})
Let $(N^n,g)$ be a complete smooth Riemannian manifold with smooth boundary $\partial N$. If $Ric_{g}\geq -(n-1)\alpha^2$ for some $\alpha \geq 0$ and if the mean curvature $H$ of $\partial N$ with respect to its inward pointing normal satisfies $H \geq (n-1)c_0 \geq (n-1)\alpha$. Then
\begin{align}
d_g (x,\partial N)\leqslant U(\alpha,c_0) ,\, \, \forall x \in N.
\end{align}
where $d_g$ denotes the distance function induced by $g$ and
\begin{align}
U(\alpha,c_0)=\left\{
             \begin{array}{ll}
               \frac{1}{c_0} ,& \hbox{ if $\alpha =0$;} \\
               \frac{1}{\alpha}\coth^{-1}(\frac{c_0}{\alpha}), & \hbox{ if $\alpha > 0$;}
             \end{array}
           \right.
\end{align}
\end{proposition}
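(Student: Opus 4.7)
The plan is a standard Riccati comparison along a minimising normal geodesic from the boundary. First I would fix $x \in N$, set $L := d_g(x, \partial N)$, and invoke completeness together with compactness of closed distance balls to produce a unit-speed minimising geodesic $\gamma : [0, L] \to N$ with $\gamma(L) = x$ and $\gamma(0) = p \in \partial N$; the first variation of arc length forces $\dot\gamma(0)$ to be the inward unit normal $\nu$ at $p$. Because $\gamma|_{[0, L]}$ is minimising, no interior point $\gamma(t)$ for $t \in (0, L)$ is a focal point of $\partial N$, so the distance function $r(y) := d_g(y, \partial N)$ is smooth on a neighbourhood of $\gamma((0, L))$ with $\nabla r \circ \gamma = \dot\gamma$.

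Next I would derive a Riccati-type inequality for $h(t) := -\Delta r(\gamma(t))/(n-1)$, which geometrically is $1/(n-1)$ times the mean curvature (inward-normal convention) of the equidistant hypersurface $\{r = t\}$. The Bochner identity applied to $r$, together with $|\nabla r|\equiv 1$, yields
\[
\tfrac{d}{dt}(\Delta r) \;=\; -\bigl|\nabla^2 r\bigr|^2 - \mathrm{Ric}(\dot\gamma, \dot\gamma).
\]
Since $\nabla r$ is a null eigenvector of $\nabla^2 r$, a Cauchy-Schwarz estimate on the remaining $n-1$ eigenvalues gives $|\nabla^2 r|^2 \geq (\Delta r)^2/(n-1)$, and combining this with the Ricci hypothesis $\mathrm{Ric} \geq -(n-1)\alpha^2$ and rewriting in terms of $h$ produces
\[
h'(t) \;\geq\; h(t)^2 - \alpha^2, \qquad t \in (0, L).
\]
As $t \downarrow 0$ the shape operator of $\{r=t\}$ converges (via the Jacobi/Riccati flow from $\partial N$) to that of $\partial N$ at $p$, so $\liminf_{t \downarrow 0} h(t) \geq H(p)/(n-1) \geq c_0$.

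The conclusion then follows by ODE comparison with the model equation $y' = y^2 - \alpha^2$, $y(0) = c_0$. Separation of variables shows explicitly that its solution blows up exactly at $T^* = 1/c_0$ when $\alpha = 0$ and at $T^* = (1/\alpha)\coth^{-1}(c_0/\alpha)$ when $\alpha > 0$, i.e.\ $T^* = U(\alpha, c_0)$. Applying the standard scalar comparison principle on $[t_\epsilon, L)$ with $t_\epsilon \downarrow 0$, and using the initial-value bound derived above, $h$ dominates the model solution up to an $\epsilon$-perturbation that vanishes in the limit, so $h$ itself must blow up no later than $U(\alpha, c_0)$. But $h$ is smooth and hence finite on $(0, L)$, forcing $L \leq U(\alpha, c_0)$, which is the asserted distance bound.

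The main technical subtlety I expect is the rigorous handling of the boundary behaviour: the smoothness of $r$ on the open segment $\gamma((0, L))$, the identification of $\liminf_{t \downarrow 0} h(t)$ with $H(p)/(n-1)$, and the absence of focal points of $\partial N$ in the interior of $\gamma$. These are standard but non-trivial facts about distance functions to smooth hypersurfaces in a complete Riemannian manifold, typically proved by running the normal exponential map and analysing the Jacobi/Riccati flow initialised by the shape operator of $\partial N$. Once these geometric preliminaries are in hand, the remainder of the argument is purely ODE comparison applied to the Bochner-derived inequality, with no further analytic input required.
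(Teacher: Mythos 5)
The paper does not prove this proposition; it merely states it and cites \cite{LN} and the Riemannian adaptation of Hawking's singularity theorem in \cite{book1}. Your argument is precisely the standard Riccati-comparison proof used in those sources, and it is correct: a minimising normal geodesic $\gamma$ from $x$ to $\partial N$ meets $\partial N$ orthogonally and avoids the cut and focal loci on its interior, so $r=d_g(\cdot,\partial N)$ is smooth there; the Bochner identity with the Cauchy--Schwarz bound $|\nabla^2 r|^2\ge(\Delta r)^2/(n-1)$ and the lower Ricci bound yield $h'\ge h^2-\alpha^2$ for $h=-\Delta r/(n-1)$, with $h(0^+)\ge H(p)/(n-1)\ge c_0$; comparison with the model ODE $y'=y^2-\alpha^2$, $y(0)=c_0$, whose maximal existence time is exactly $U(\alpha,c_0)$, then forces $L=d_g(x,\partial N)\le U(\alpha,c_0)$ since $h$ is finite on $(0,L)$. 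The geometric preliminaries you flag (existence of the minimising geodesic via completeness, smoothness of $r$ away from the cut/focal locus, convergence of the equidistant shape operators to that of $\partial N$ as $t\downarrow 0$) are the standard facts one must invoke, and you handle the $\liminf$ initial condition correctly by running the comparison on $[t_\epsilon,L)$ and letting $t_\epsilon\downarrow 0$.
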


The following lemma is about the symmetric matrix, which is needed in our proof of the main theorem. The proof can be found in \cite{LN}.
\begin{lemma}(\cite{LN} Lemma A.1)
For an $n\times n$ real symmetric matrix $M$, let $\lam_1(M),\cdots,\lam_n(M)$ denote its eigenvalues. There exists a constant $C(n)>0$ such that $\forall \epsilon>0$ and any two symmetric matrices $M$ and $\tilde M$ satisfying $|M-\tilde M|<\epsilon$, there holds for some permutation $\sigma=\sigma(M,\tilde M)$ that
$$\sum_{i=1}^n|\lam_i(M)-\lam_{\sigma(i)}(\tilde M)|<C(n)\epsilon.$$

\end{lemma}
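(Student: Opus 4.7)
The plan is to reduce the statement to the classical Weyl perturbation inequality for Hermitian (symmetric) matrices and then sum over indices. The permutation $\sigma$ will simply be the identity once the eigenvalues of both $M$ and $\tilde M$ are listed in the same monotone order; all the work is packaged in controlling $|\lam_i(M)-\lam_i(\tilde M)|$ index-by-index.

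First I would order the eigenvalues of both matrices in decreasing order, $\lam_1(M)\ge\cdots\ge\lam_n(M)$ and $\lam_1(\tilde M)\ge\cdots\ge\lam_n(\tilde M)$. The min-max characterization of Courant--Fischer
\[
\lam_i(M)=\max_{\substack{V\subset\R^n\\ \dim V=i}}\min_{\substack{x\in V\\ |x|=1}}\langle Mx,x\rangle
\]
yields Weyl's inequality
\[
|\lam_i(M)-\lam_i(\tilde M)|\le \|M-\tilde M\|_{\mathrm{op}}
\]
for each $i=1,\dots,n$. This is the core analytic input; it follows from the fact that, for any unit vector $x$, $|\langle(M-\tilde M)x,x\rangle|\le\|M-\tilde M\|_{\mathrm{op}}$, which lets one compare the max-min expressions for $M$ and $\tilde M$ on the same subspace $V$.

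Next I would reconcile the norm $|\cdot|$ used in the hypothesis with the operator norm. Whatever matrix norm is meant by $|M-\tilde M|$ (Frobenius, entrywise sup, etc.), it is equivalent to $\|\cdot\|_{\mathrm{op}}$ on the space of $n\times n$ real symmetric matrices with constants depending only on $n$; for instance $\|A\|_{\mathrm{op}}\le\|A\|_F\le\sqrt{n}\,\|A\|_{\mathrm{op}}$. So $\|M-\tilde M\|_{\mathrm{op}}\le c(n)\e$. Summing Weyl's inequality over $i$ then gives
\[
\sum_{i=1}^n |\lam_i(M)-\lam_{\sigma(i)}(\tilde M)|\le n\,\|M-\tilde M\|_{\mathrm{op}}\le n\,c(n)\,\e,
\]
with $\sigma=\mathrm{id}$ under the chosen ordering. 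Setting $C(n):=n\,c(n)$ gives the conclusion. If one prefers to fix an arbitrary enumeration of the eigenvalues of $M$ and $\tilde M$ independently, the same estimate holds after $\sigma$ is chosen to be the permutation that realigns both lists into decreasing order, which depends on $M$ and $\tilde M$ as allowed by the statement.

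The only non-routine point is the Weyl inequality itself, and that is standard; there is no real obstacle. The statement is essentially a one-line consequence of Courant--Fischer together with norm equivalence on the finite-dimensional space of symmetric matrices, so the main care is simply to make the alignment of eigenvalues explicit so as to produce the permutation $\sigma$ claimed in the lemma.
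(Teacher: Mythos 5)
The paper does not supply its own proof of this lemma; it simply cites \cite{LN}, Lemma A.1. Your argument via the Courant--Fischer min-max characterization and Weyl's eigenvalue perturbation inequality, followed by finite-dimensional norm equivalence and sorting both spectra into the same monotone order to produce the permutation $\sigma$, is correct and is the standard route to this bound.
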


\section{Deformation of the equation}
In this section we deform equation \eqref{Einstein} to a equation which has an unique solution. Consider
\begin{equation}\label{deform}
\begin{split}
&\sigma_k^{\frac{1}{k}}(\lambda(g^{-1}[\lambda_k(1-\varphi(t))g-\varphi(t)E_g-\nabla^2 u+\Delta u g-du\otimes du+\frac{3-n}{2}|\nabla u|^2g]))\\
&=\varphi(t)h(x)e^{-2u}+(1-t)(\int_M e^{-(n+1)u})^{\frac{2}{n+1}}
\end{split}
\end{equation}
where $\varphi \in C^1[0,1]$ satisfies $0\le\varphi(t)\le 1$, $\varphi(0)=0$, $\varphi(t)=1$ for $t\ge \frac{1}{2}$; and  $\lambda_k=(C_n^k)^{-\frac{1}{k}}$.
This deformation is valid for any $k$ and $n$.

At $t=1$,  \eqref{deform} becomes \eqref{Einstein}; while $t=0$, it turns to
\begin{equation*}
\begin{split}
&\sigma_k^{\frac{1}{k}}(\lambda(g^{-1}[\lambda_k g-\nabla^2 u+\Delta u g-du\otimes du+\frac{3-n}{2}|\nabla u|^2g]))\\
&=(\int_M e^{-(n+1)u})^{\frac{2}{n+1}}.
\end{split}
\end{equation*}
We can show that this equation has the unique solution $u(x)\equiv 0$. It is easy to see that $u\equiv 0$ is a solution. The uniqueness can be shown as follows: let $x_0$ be the maximum point of $u$ on $M$. At this point we have $\left.\nabla u \right|_{x_0}=0$, and $\left.\nabla^2 u \right|_{x_0}$ is negative semi-definite and then $-\nabla^2 u+\Delta u g$ is negative semi-definite. Then at $x_0$ we get
\begin{equation*}
\begin{split}
\lambda_k &=\sigma_k^{\frac{1}{k}}(\lambda(g^{-1}\cdot g))\\
&\ge \sigma_k^{\frac{1}{k}}(\lambda(g^{-1}[\lambda_k g-\nabla^2 u+\Delta u g+du\otimes du+\frac{3-n}{2}|\nabla u|^2g]))\\
&=\left(\int_M e^{-(n+1)u}\right)^{\frac{2}{n+1}}
\end{split}
\end{equation*}
Similarly, at the minimum point of $u$, we have $\lambda_k\le(\int_M e^{-(n+1)u})^{\frac{2}{n+1}}$. Therefore, $\lambda_k =(\int_M e^{-(n+1)u})^{\frac{2}{n+1}}.$

On the other hand, by Newton-Maclaurin inequality, we have,
\begin{equation*}
\begin{split}
\lambda_k &= \sigma_k^{\frac{1}{k}}(\lambda(g^{-1}[\lambda_k g-\nabla^2 u+\Delta u g-du\otimes du+\frac{3-n}{2}|\nabla u|^2g]))\\
&\le \frac{1}{n}\sigma_1(\lambda(g^{-1}[\lambda_k g-\nabla^2 u+\Delta u g+du\otimes du+\frac{3-n}{2}|\nabla u|^2g]))\\
&= \frac{1}{n}\left((n-1)\Delta u -\dfrac{n(n-3)+2}{2}\left|\nabla u\right|^2+n\lambda_k\right).
\end{split}
\end{equation*}
Then we get
$$(\frac{n}{2}-1)\int_M \left|\nabla u\right|^2\le\int_M \Delta u=0$$
thus $u\equiv\text{constant}=0$.

Now we define the operator as in \cite{GV2, HS1}
\begin{equation*}
\begin{split}
\Psi_t&=\sigma_k^{\frac{1}{k}}(\lambda(g^{-1}[\lambda_k(1-\varphi(t))g-\varphi(t)E_g-\nabla^2 u+\Delta u g-du\otimes du+\frac{3-n}{2}|\nabla u|^2g]))\\
&-\varphi(t)h(x)e^{-2u}-(1-t)(\int_M e^{-(n+1)u})^{\frac{2}{n+1}}.
\end{split}
\end{equation*}
When $t=0$, $\Psi_0[u]=0$ has unique solution $u\equiv 0$ and the linearization of $\Psi_0$ at $u\equiv 0$ is invertible.

We define the Leray-Schauder degree $deg(\Psi_t, {\mathcal{O}}, 0)$ as in \cite{L2, GV3}, where ${\cal{O}}=\{u\in C^{4,\alpha}(M): u \, \text{is}\,  k\text{-admissible}, dist(\lambda(A^\tau_g),\partial \Gamma^+_k)>\frac{1}{C}\}$. Then $deg(\Psi_0,{\cal{O}}_0,0)\neq 0$ at $t=0$. Of course we would like to use the homotopy-invariance of the degree to conclude  $deg(\Psi_t,{\cal{O}},0)$ is non-zero for some open set ${\cal{O}}\subset C^{4,\alpha}$.  To do this we need to establish a priori estimate for \eqref{deform}.
By \cite{HS2}, $C^1$ and $C^2$  estimates have been already done. Once we get the $L^\infty$ bound, by the standard Evans-Krylov theory, we have the $C^{2,\alpha}$ estimate and then the higher order estimates can be established.

Here we also note for $t\in [0, 1-\delta]$ and $\delta$ sufficiently small, the equation \eqref{deform} has $C^0$ estimate by the
work of He-Sheng \cite{HS2}. In fact this can be done by use of the $C^1$ and $C^2$ estimates in \cite{HS2} and a similar
argument in \cite{GW1}. So we just need to focus on the case $t = 1$.

\section{The priori estimates}
In this section, we derive the $C^0$ bound by a contradiction argument.

In \cite{HS1, HS2}, the authors have already got the point-wise $C^1$ and $C^2$ estimate \eqref{4.3} with $g_u=e^{-2u}g$:
\begin{equation}\label{4.3}
\sup_M (|\nabla^2 u|+|\nabla u|^2)(x)\le C\left(1+e^{-2\inf_M u}\right).
\end{equation}

Here for convenience we use the form $g_u=u^{\frac{4}{n-2}}g$, substitute it to \eqref{4.3} we have
\begin{equation}
\sup\limits_M (|\nabla^2 \log u|+|\nabla \log u|^2)(x)\le C(1+\max\limits_M u^{\frac{4}{n-2}}).
\end{equation}
Now we only need to derive the $C^0$ bound.

Without loss of generality, we may assume that $\sigma_2(\lam(-E_{g_{can}}))=1$ on $S^n$, where $g_{can}$ is the standard metric on $S^n$.

First we'll show the upper bound of $u$ implies it's lower bound.
Assume at this moment we have established the estimate
$$\max u\le C.$$
This implies that
\begin{equation}
\sup\limits_M (|\nabla^2 \log u|+|\nabla \log u|^2)(x)\le C.
\end{equation}

Now we'll show that :
$$\min_M u\ge \frac{1}{C}.$$

We argue it by contradiction. Suppose there exists a sequence $\{u_i\}$ such that
\begin{equation}\label{contr}
\min_M u_i\rightarrow 0.
\end{equation}
By definition, the metrics $g_i=u_i^{\frac{4}{n-2}}g$ satisfy
\begin{equation}\label{symsqYamabe}
\sigma_k(-\lam(E_{g_i}))=1~~-\lam(E_{g_i})\in\Gamma_k^+.
\end{equation}
Note here $g_i=(\frac{u_i}{u_1})^{\frac{4}{n-2}}g_1$, evaluating \eqref{symsqYamabe} at the maximum point $\bar x_i$ of $\frac{u_i}{u_1}$, since we have the relationship between $E_{g_i}$ and $E_{g_1}$:
\begin{equation*}
\begin{split}
E_{g_i}&=-\dfrac{2}{n-2}(\frac{u_i}{u_1})^{-1}\nabla^2 (\frac{u_i}{u_1})+\dfrac{2}{(n-2)}(\frac{u_i}{u_1})^{-1}(\Delta (\frac{u_i}{u_1}))g_1+\dfrac{2n}{(n-2)^2}(\frac{u_i}{u_1})^{-2}d(\frac{u_i}{u_1})\otimes d(\frac{u_i}{u_1})\\
&-\dfrac{2}{(n-2)^2}(\frac{u_i}{u_1})^{-2}|\nabla (\frac{u_i}{u_1})|^2g_1+E_{g_1}
\end{split}
\end{equation*}
and $-\frac{2}{n-2}(\frac{u_i}{u_1})^{-1}\nabla^2 (\frac{u_i}{u_1})+\frac{2}{(n-2)}(\frac{u_i}{u_1})^{-1}(\Delta (\frac{u_i}{u_1}))g_1$ is positive definite at $\bar x_i$(by $\tau<1$),
we obtain
$$1\ge \left(\dfrac{u_i}{u_1}\right)^{-\frac{4}{n-2}}{\sigma_k}(-\lam(E_{g_1}(\bar x_i)))=\left(\dfrac{u_i}{u_1}\right)^{-\frac{4}{n-2}}$$
which implies $\max_M u_i\ge u_1(\bar x_i)\ge \min_M u_1$. On the other hand we have the Harneck inequality $\min_M u_i\ge \frac{1}{C}\max_M u_i$ which comes from the $C^1$ estimate. Combining these inequalities, we have $\min_M u_i\ge\frac{1}{C}\min_M u_1$ which contradicts \eqref{contr}. Since we have established the $L^\infty$, $C^1$ and $C^2$ estimates, the higher order estimate on $\log u$ follows from the standard Evans-Krylov's and Schauder's estimates.

Now it is sufficient to check $\max_M u\le C$ by the above argument.
We also argue it by contradiction. Assume $\exists \{u_i\}$ a sequence of smooth positive functions on $M$ such that
$g_i=u_i^{\frac{4}{n-2}}g$ satisfy \eqref{Einstein} but
$$u_i(x_i)=\max\limits_M u_i\rightarrow\infty ~~\text{as}~~ i\rightarrow\infty. $$
Suppose $x_\infty$ is a blow-up point, we have $x_i\rightarrow x_\infty$ in the metric topology induced by the initial metric on $M$.
\subsection{The unique blow-up point}
Now we show the blow-up point is unique, in fact we can control the speed of blow-up of the sequence $u_i$,
\begin{equation}\label{blow-up}
u_i\le Cd_g(x,x_i)^{-\frac{n-2}{2}},~~\forall x\in M\backslash\{x_i\}.
\end{equation} where C is a constant independent of $i$.

This property was observed by Y.Y.Li-Nguyen \cite{LN}. Our proof follows the work in \cite{LN}.
The following lemma plays an important role to establish \eqref{blow-up}. It reveals some concentration property of the volume of a small neighborhood of the blow-up sequence $\{x_i\}$.

\begin{lemma}{\rm{(Lemma 3.1 of \cite{LN})}}
Assume for some $C_1\ge 1,K_i\rightarrow\infty$ and $y_i\in M$, that $u_i\rightarrow\infty$; set
$D_{K_i}=\{y\in M|d_g(y,y_i)\le K_i u_i(y_i)^{-\frac{2}{n-2}}\} $, and assume
\begin{equation}\label{$D_{K_i}$}
\sup\limits_{D_{K_i}}u_i\le C_1 u_i(y_i).
\end{equation}
Then for any $0<\mu<1$, there exists $K=K(C_1,\mu)$ such that for $i$ sufficiently large
$$\Vol_{g_i}(D_{K})\ge (1-\mu)\Vol_{g_i}(M)$$
\end{lemma}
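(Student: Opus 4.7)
The plan is to rescale around $y_i$, identify the limit as a standard bubble via Theorem \ref{Liouville}, and then exploit the positivity of $Ric_{g_i}$ together with the Riemannian Hawking-type theorem from Section~2 to confine almost all of the $g_i$-volume of $M$ inside the $g$-ball $D_K$.

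Concretely, I set $\varepsilon_i=u_i(y_i)^{-2/(n-2)}$ and, in $g$-normal coordinates at $y_i$, put $v_i(x)=u_i(\exp_{y_i}(\varepsilon_i x))/u_i(y_i)$ for $x\in B_{K_i}(0)\subset\R^n$. Then $v_i(0)=1$, and the assumption on $\sup_{D_{K_i}} u_i$ gives $v_i\le C_1$ on $B_{K_i}(0)$. The pointwise estimate \eqref{4.3}, rewritten for $\log u_i$ and rescaled, becomes a \emph{uniform} bound $|\nabla\log v_i|+|\nabla^2\log v_i|\le C(C_1)$ on compact subsets of $\R^n$, because the factor $\max u^{4/(n-2)}$ is cancelled precisely by $\varepsilon_i^2$. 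Arzel\`a--Ascoli and Evans--Krylov then yield a subsequential limit $v_i\to v_\infty$ in $C^{2,\alpha}_{\mathrm{loc}}(\R^n)$ with $v_\infty>0$. Since the Riemannian curvature of $g$ scales out by $\varepsilon_i^2$, the limit metric $g_\infty=v_\infty^{4/(n-2)}|dx|^2$ satisfies $\sigma_k(-\lambda(E_{g_\infty}))=1$ on $(\R^n,\delta)$. This is the Liouville setting at the critical exponent $p=(n+2)/(n-2)$, so Theorem~\ref{Liouville} forces $v_\infty$ to be the standard bubble, and in particular $A:=\int_{\R^n}v_\infty^{2n/(n-2)}\,dx<\infty$.

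The conformal volume element transforms via $dV_{g_i}\circ\exp_{y_i}(\varepsilon_i\cdot)=v_i^{2n/(n-2)}(1+O(\varepsilon_i^2|x|^2))\,dx$, so for every fixed $K$,
\begin{equation*}
\Vol_{g_i}(D_K)\longrightarrow A_K:=\int_{B_K(0)}v_\infty^{2n/(n-2)}\,dx,
\end{equation*}
and $A_K\uparrow A$ as $K\to\infty$. Given $\mu\in(0,1)$, I fix $K=K(C_1,\mu)$ with $A_K\ge(1-\mu/3)A$. The decisive step is then to establish the matching global bound $\Vol_{g_i}(M)\le(1+o(1))A$. I would proceed via Bishop--Gromov volume comparison, available on $(M,g_i)$ since $-E_{g_i}\in\Gamma_2^+$ forces $Ric_{g_i}\ge 0$ (Corollary to Proposition~\ref{basic property}), combined with the Hawking-type theorem of Section~2: in the bubble limit, $\partial D_K$ has mean curvature of order $+(n-1)K$ relative to the inward-into-$D_K$ normal, so Hawking with $\alpha=0$ constrains the $g_i$-inradius of $D_K$, while a parallel application to a shell just inside the minimal equator of the bubble (where the outward mean curvature is still positive) pins the $g_i$-diameter of $M\setminus D_K$. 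Feeding these constraints into Bishop--Gromov forces $\Vol_{g_i}(M\setminus D_K)=o_K(1)$, whence the desired ratio.

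The main obstacle is exactly this last step. A naive application of Hawking to $N=M\setminus D_K$ fails, because the mean curvature of $\partial D_K$ relative to the inward-into-$N$ normal is very \emph{negative} (of order $-(n-1)K$) in the bubble limit. One must therefore apply Hawking on the opposite side of the bubble's minimal equator, carefully patch the resulting distance estimates, and verify that no "ghost" secondary bubble can form in $M\setminus D_{K_i}$ and steal a definite fraction of the total volume. Getting this bookkeeping right, uniformly in $i$, is the technical heart of the argument, and it is where the explicit form of the bubble from Theorem~\ref{Liouville} enters most crucially.
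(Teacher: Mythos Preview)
Your overall strategy---rescale around $y_i$, identify the limit as a standard bubble via the Liouville theorem, then combine the Hawking-type distance bound with Bishop--Gromov to control $\Vol_{g_i}(M\setminus D_K)$---is exactly the paper's (and Li--Nguyen's) approach. The issue is in your ``main obstacle'' paragraph: the direct application of Hawking to $N=M\setminus D_K$ does \emph{not} fail, and the workaround through the equator is unnecessary.

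The confusion is geometric. In the $g_i$-metric (not the background $g$-metric), the rescaled limit is the round sphere, with the center $y_i$ of $D_K$ playing the role of one pole and the point at infinity of the bubble the other. For $K$ large, $D_K$ covers almost the entire sphere; the complement $N=M\setminus D_K$ is a \emph{small} spherical cap of $g_i$-radius $\rho\to 0$ as $K\to\infty$. Hence the mean curvature of $\partial N$ with respect to its inward normal (in the Hawking convention) is $(n-1)\cot\rho\sim (n-1)/\rho$, large and \emph{positive}. Your figure ``$-(n-1)K$'' comes from reading the picture in the $g$-geometry, where $D_K$ is a tiny ball; but Hawking is being applied with respect to $g_i$, and there the roles of ``small'' and ``large'' are reversed. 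The paper states this directly: choosing $R=R(\epsilon)$ large, one has $H_{g_i}\ge \epsilon^{-1}$ on $\partial\Phi_i(B(0,R))$, so Hawking with $\alpha=0$ gives $\mathrm{diam}_{g_i}(M\setminus\Phi_i(B(0,R)))\le C\epsilon$, and Bishop (using $Ric_{g_i}\ge 0$) yields $\Vol_{g_i}(M\setminus\Phi_i(B(0,R)))\le C\epsilon^n$. Setting $K=R/2$ and $\mu=\epsilon^n$ finishes the lemma. Once you correct the sign, your sketch collapses to precisely this argument; the bookkeeping you worried about does not arise.
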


\begin{proof}
This argument comes from Lemma 3.1 of \cite{LN}. We need a little modification here since the coefficients in the formula of conformal change of Einstein tensor $E_g$ is different to the Schouten tensor $A_g$ in \cite{LN}. We state the outline of the proof.
Let $p\in \mathbb{R}^n,a>0$ and $b=\frac{n-2}{2n-2-n\tau}$, define
\begin{equation}
U_{a,p;b}(x)=\left(\dfrac{2a}{1+ba^2|x-p|^2}\right)^{\frac{n-2}{2}}
\end{equation}
$$S^n=\{z=(z_1,\cdots,z_n)\in\mathbb{R}^{n+1}|z_1^2+\cdots+z_n^2=1\}.$$
Let $(x_1,\cdots,x_n)\in\mathbb{R}^n$ be the stereographic projection coordinates of $S^n$, then
$$g_{can}=|dz|^2=\left(\dfrac{2}{1+|x|^2}\right)^2|dx|^2.$$
Let $x=\sqrt b x'$ we then have
$$g_{can}=|dz|^2=\left(\dfrac{2}{1+b|x'|^2}\right)^2 b|dx'|^2=U_{1,0;b}^{\frac{4}{n-2}}(x')b|dx'|^2=U_{1,0}^{\frac{4}{n-2}}(x')g_{flat}$$
where $g_{flat}=|dx|^2$ is the standard Euclidean metric on $\mathbb{R}^n$.
It follows that
$$f(\lam(-E_{U_{a,p;b}^{\frac{4}{n-2}}g_{flat}}^\tau))=1.$$

Now we define a map from the tangent space to the manifold $\Phi_i:T_{y_i}(M,g)\rightarrow M$ by
$$\Phi_i(x)=\exp_{y_i}\dfrac{2x}{u_i(y_i)^{\frac{2}{n-2}}},$$
then we can get $f(\lam(A_{(u_i\circ\Phi_i)^{\frac{4}{n-2}}\Phi_i^{*}g}^\tau))=1$
and let
$$\tilde u_i(x)=\dfrac{2^{\frac{n-2}{2}}}{u_i(y_i)}u_i\circ\Phi_i(x), x\in \mathbb{R}^n.$$

By Lemma 3.1 of \cite{LN}, we konw $\tilde u_i$ subconverges  to some positive $\tilde u_*\in C^2(\mathbb{R}^n)$ and actually by the Liouville theorem in section 2, $\tilde u_*=U_{a_*,x_*;b}$ for some bounded $a_*>0$ and $x_*\in \mathbb{R}^n$.

Process as lemma 3.1 of \cite{LN}, this implies that
$\forall \epsilon>0,\exists R=R(\epsilon,C_1)>0$ such that
$$|{\text{Vol}}_{g_i}(\Phi_*(B(0,R)))-{\text{Vol}}(S^n)|\le C\epsilon^n$$ for some $C$ independent of $i$ and $\epsilon$.
Let $H_{g_i}$ be the mean curvature of $\partial\Phi_i(B(0,R))$ with respect to $g_i$, this also implies $H_{g_i}\ge \epsilon^{-1}$.

By the Hawking's lemma in section 2 and the above property, we see that
$${\text{diam}}_{g_i}(M\backslash\Phi(B(0,R)))\le C\epsilon.$$
this inequality together with Bishop's comparison theorem imply that
$${\text{Vol}}_{g_i}(M\backslash\Phi(B(0,R)))\le C\epsilon^n,$$
then ${\text{Vol}}_{g_i}(\Phi(B(0,R)))\ge (1-\epsilon^n)C$.

Now let $K=\frac{1}{2}R$ and $\mu=\epsilon^n$, then $D_{K_i}=\Phi(B(0,R)$, the proof is complected.
\end{proof}

With this lemma we can immediately get \eqref{blow-up} by the argument in \cite{LN}.

Besides, we can also get the similar higher order estimates as in step 2 of \cite{LN} :
\begin{equation}\label{gradient-blow-up}
|\nabla^k \log u_i(x)|\le C d_g(x,x_i)^{-k} ~~\text{for} x\neq x_i, k=1,2.
\end{equation}
and the similar convergence behavior as in step 3 of \cite{LN} :
 $$\lim\limits_{i\rightarrow\infty} u_i=u_\infty$$
with $u_\infty(x)\equiv 0 $ in $M\setminus\{x_\infty\}$. Unfortunately this limit function is difficult to analysis. In order to get some useful result we need to rescale it as follows: fix some point $p\in M\setminus\{x_\infty\}$ and let
$$v_i(x)=\dfrac{u_i(x)}{u_i(p)}$$
$v_i$ subconverges, for every $0<\alpha<1$, in $C^{1,\alpha}(M\setminus \{x_\infty\},g)$ to some positive function $v_\infty\in C^{1,1}(M\setminus \{x_\infty\},g)$ which satisfies $v_\infty(p)=1$ and
\begin{equation}\label{infty-gradient-blow-up}
|\nabla^k \log v_\infty(x)|\le C d_g(x,x_\infty)^{-k} ~~\text{for} x\neq x_i, k=1,2.
\end{equation}
Furthermore, $-E_{g_{v_\infty}}$ is on the boundary of the Garding cone $\Gamma^+_k$ in viscosity sense as $u_i(p)$ trends to 0.

\subsection{The blow-up order}
In this section, we show that $v_{\infty}$ has an asymptotic behavior near $x_{\infty}$ of order $n-2$, i.e
\begin{equation}
\lim\limits_{x\rightarrow x_\infty} v_\infty d_g(x,x_\infty)=a
\end{equation}
where $a\in (0,+\infty)$ is a constant.

By \eqref{infty-gradient-blow-up}, we can immediately get
\begin{equation}
\limsup\limits_{x\rightarrow x_\infty} v_\infty d_g(x,x_\infty)=A<+\infty.
\end{equation}
We only need to show
\begin{equation}\label{a}
\liminf\limits_{x\rightarrow x_\infty} v_\infty d_g(x,x_\infty)=a
\end{equation}
and $A=a$.

As in \cite{LN}, note here $-E_{g_{v_\infty}} \in \partial\Gamma^+_k$. Then we have the super-harmonicity of $v_\infty$, i.e $\Delta_g v_\infty-\frac{(n-2)}{4(n-1)}R_g v_\infty\le 0$. Let $L_g=\Delta_g-\frac{n-2}{4(n-1)}R_g$ be the conformal Lapalacian, then $L_gv_\infty\le 0$.
We then have the following lemma.
\begin{lemma}(Lemma 3.3 of \cite{LN})\label{4.2}
Let $\Omega$ be an open neighborhood of a point $p\in M$. If $w$ is a nonnegative lower semi-continuous function in
$\Omega\setminus \{p\}$ and satisfies $L_g w\le 0$ in the viscosity sense in $\Omega\setminus \{p\}$, then
$$\lim_{r\rightarrow 0}r^{n-2}\min_{\partial B_g(p,r)}w<+\infty.$$
\end{lemma}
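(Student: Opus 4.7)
The plan is to compare $w$ against a suitable multiple of the Green's function of $L_g$ with pole at $p$ and derive a contradiction. First I would construct the Green's function $G$ of $L_g$ on a small geodesic ball $B_g(p,R)\subset\Omega$ with zero Dirichlet data on $\partial B_g(p,R)$, choosing $R$ small enough that the Dirichlet problem for $L_g$ is uniquely solvable (by coercivity of $L_g$ on small balls). The standard parametrix construction produces a positive $G$ on $B_g(p,R)\setminus\{p\}$ with the fundamental-solution asymptotic
\[
G(x)=\frac{1}{(n-2)\omega_{n-1}}\,d_g(x,p)^{-(n-2)}(1+o(1))\quad\text{as } x\to p,
\]
so that $\max_{\partial B_g(p,r)}G = c_n\,r^{-(n-2)}(1+o(1))$ as $r\downarrow 0$, for a constant $c_n>0$.

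Next I argue by contradiction. Write $m(r)=\min_{\partial B_g(p,r)}w$ and assume $\limsup_{r\to 0}r^{n-2}m(r)=+\infty$. Fix a point $x_0\in B_g(p,R/2)\setminus\{p\}$ at which $w$ is finite; such $x_0$ exists since $w$ is lower semicontinuous and, in the application, $w=v_\infty$ takes the value $1$ at an interior reference point. Choose a sequence $r_j\downarrow 0$ with $r_j^{n-2}m(r_j)\to+\infty$ and set
\[
\alpha_j \;=\; \frac{m(r_j)}{\max_{\partial B_g(p,r_j)}G}\;\sim\;\frac{m(r_j)\,r_j^{n-2}}{c_n}\;\longrightarrow\;+\infty.
\]
By construction $\alpha_j G\le m(r_j)\le w$ on $\partial B_g(p,r_j)$, while $\alpha_j G=0\le w$ on $\partial B_g(p,R)$.

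The crux is a viscosity comparison on the annulus $A_j=B_g(p,R)\setminus\overline{B_g(p,r_j)}$. Since $w$ is a viscosity supersolution of $L_g u=0$ on $A_j$ and $\alpha_j G$ is a classical solution of $L_g u=0$ there, the difference $w-\alpha_j G$ is a viscosity supersolution of $L_g u=0$ on $A_j$ with boundary values $\ge 0$. The zeroth-order coefficient of $L_g$ is $-\frac{n-2}{4(n-1)}R_g$, which is nonpositive in our setting: negative $k$-admissibility of $-E_{g_{v_\infty}}$ forces $\mathrm{tr}_g(-E_{g_{v_\infty}})=R_g/2>0$, so the standard maximum principle for viscosity supersolutions yields $w\ge\alpha_j G$ throughout $A_j$. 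Evaluating at $x_0$ gives $w(x_0)\ge\alpha_j G(x_0)\to+\infty$, contradicting $w(x_0)<+\infty$.

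The main technical obstacle I anticipate is the rigorous handling of the viscosity comparison principle across a punctured neighborhood together with the precise Green's function expansion for $L_g$ at $p$; both are standard in the subject but require care about the parametrix remainder. The positivity of $R_g$ is used to avoid rescaling by a positive first eigenfunction of $L_g$, which would otherwise be the standard device to restore the maximum principle when the zeroth-order coefficient has the wrong sign.
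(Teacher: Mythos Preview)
The paper does not supply its own proof of this lemma; it is quoted verbatim from Li--Nguyen \cite{LN} and invoked immediately to conclude that $a$ is finite. Your Green's-function comparison argument is the standard route to such a B\^ocher-type bound for supersolutions of $L_g$ near an isolated singularity, and it is essentially correct and almost certainly the argument in \cite{LN} as well.

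Two small corrections are in order. First, your justification of $R_g>0$ is misdirected: you invoke admissibility of $-E_{g_{v_\infty}}$, but that tensor lies only on $\partial\Gamma_k^+$, and its trace computes $R_{g_{v_\infty}}$, not $R_g$. The correct reason in this paper's setting is the standing hypothesis of Theorem~\ref{main thm} that the \emph{background} metric $g$ is negative $k$-admissible, which gives $\sigma_1(-\lam(E_g))=R_g/2>0$ directly. Second, this sign assumption is in any case unnecessary for the lemma as stated: you already chose $R$ small enough that $-L_g$ is coercive on $B_g(p,R)$ (first Dirichlet eigenvalue positive), and that same coercivity yields the comparison principle on every annulus $A_j\subset B_g(p,R)$ without any pointwise sign condition on $R_g$. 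This is what makes the lemma valid on a general $(M,g)$, and it renders the eigenfunction-rescaling device you mention superfluous.
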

By Lemma \ref{4.2} we immediately know that $a$ is finite.

Now it remains to show $A=a$. We prove it by contradiction also. Assume $A>a$, then we can find a sequence $\{y_i\}$ such that for some $\epsilon>0$,
$$A+\epsilon\ge d_g(y_i,x_\infty)^{n-2}v_\infty(y_i)\ge a+2\epsilon,$$
where $x_\infty=\lim_{i\rightarrow \infty} y_i$.
Also, by \eqref{a}, we have
$$d_g(y_i,x_\infty)^{n-2}\min_{d_g(y,x_\infty)=d_g(y_i,x_\infty)}v_\infty(y)\le a+\epsilon.$$

Let $R_i=d_g(y_i,x_\infty)^{-1}$, define the exponential map $E_i: B_{\delta R_i}\subset T_{x_\infty}M\rightarrow M$ by
$$\Theta_i(y)=\text{exp}_{x_\infty}(R_i^{-1}y),$$
where $\delta$ is sufficiently small, and as in subsection 4.1 we have $R_i^2\Theta_i^*g$ converges on compact subsets to the standard Euclidean metric.
Now set $\hat v_i(y)=R_i^{2-n}v_\infty\circ E_i(y)$, then $\hat v_i\in C^{1,1}_{loc}(B_{\delta R_i}\setminus \{0\})$. By a direct computation we have
\begin{equation}\label{irradical}
\left\{
\begin{split}
&\lam(-E_{\hat v_i(y)^{\frac{4}{n-2}}R_i^2\Theta_i^*g})\in \partial \Gamma^+_k ~~\text{in}~~ B_{\delta R_i}\setminus\{0\} \\
 &\min_{\partial B_1} \hat v_{i}\le a+\epsilon ~~\text{and}~~ \max_{\partial B_1} \hat v_{i}\ge a+2\epsilon.
\end{split}
\right.
\end{equation}
Then by the estimates \eqref{gradient-blow-up} in subsection 4.1, a subsequence of $\hat v_i$ converges uniformly
to a limit $\hat v_*\in C^{1,1}_{loc}(\mathbb{R}^n\setminus \{0\})$ and satisfies in the viscosity sense
\begin{equation}
\lam\left(-E_{\hat v_*^{\frac{4}{n-2}}g_{flat}}\right) \in \partial\Gamma^+_k~~\text{in}~~\mathbb{R}^n\setminus\{0\}
\end{equation}
then we get $v_*$ is radially symmetric by the following lemma.
\begin{lemma}(cf. Theorem 1.18 in \cite{L1})
For $n\ge 3$, Let $\Gamma^+_k$ be the Garding cone, and let $u$ be a positive, locally Lipschitz viscosity solution of
\begin{equation}
\lam\left(-E_{u^{\frac{4}{n-2}}g_{flat}}\right) \in \partial\Gamma^+_k~~\text{in}~~\mathbb{R}^n\setminus\{0\}.
\end{equation}
Then $u$ is radially symmetric about the origin and $u'(r)\le 0$ for almost all $0<r<\infty$.
\end{lemma}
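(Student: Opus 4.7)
The plan is to prove this by the method of moving spheres, specifically the version developed by A.~Li and Y.-Y.~Li for conformally invariant degenerate elliptic equations (which is what [L1] supplies in the cited statement). The crucial input is that the equation
\[
\lam(-E_{u^{4/(n-2)}g_{\text{flat}}})\in\partial\Gamma^+_k
\]
is invariant under Möbius transformations of $\R^n\cup\{\infty\}$: if $\varphi$ is such a map and $v(x)=|J_\varphi(x)|^{(n-2)/(2n)}\,u(\varphi(x))$, then $v^{4/(n-2)}g_{\text{flat}}=\varphi^{*}(u^{4/(n-2)}g_{\text{flat}})$, and consequently $v$ satisfies the same equation on the pulled-back punctured domain in the viscosity sense. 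Taking $\varphi$ to be the inversion about $\partial B_\lam(p)$ yields the Kelvin transform
\[
u_{p,\lam}(x)=\left(\frac{\lam}{|x-p|}\right)^{n-2}u\!\left(p+\frac{\lam^{2}(x-p)}{|x-p|^{2}}\right),
\]
which is again a viscosity solution on its natural domain. Because $-E_g\in\partial\Gamma^+_k\subset\overline{\Gamma^+_1}$, one also has $\sigma_1(-E_g)\ge 0$, i.e.\ a conformal-Laplacian inequality $L_{g_{\text{flat}}}u\le 0$ in the viscosity sense, which provides the comparison tool needed below.

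For each fixed $p\in\R^n\setminus\{0\}$, I would then run the moving-sphere procedure. For sufficiently small $\lam>0$ one has $u_{p,\lam}\le u$ on $B_\lam(p)\setminus\{p\}$, because the factor $(\lam/|x-p|)^{n-2}$ is controlled away from $p$, while near $p$ the value $u(p+\lam^{2}(x-p)/|x-p|^{2})$ is as close to $u(\infty)$ as we like for $\lam$ small. Define
\[
\bar\lam(p)=\sup\{\lam>0\,:\,u_{p,\mu}\le u\text{ on }B_\mu(p)\setminus\{p\}\text{ for every }0<\mu\le \lam\}.
\]
The main step is the dichotomy: either $\bar\lam(p)=|p|$ for every such $p$, or else $u$ must coincide with an explicit Möbius bubble (which is ruled out by the singularity at the origin). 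If $\bar\lam(p)<|p|$, then $w=u-u_{p,\bar\lam(p)}$ is a nonnegative viscosity super-solution of a linear uniformly elliptic operator on a neighborhood of $\partial B_{\bar\lam(p)}(p)$: since $u$ and $u_{p,\bar\lam(p)}$ satisfy the same conformally invariant equation, their difference satisfies a linearized inequality along a convex combination path, and the strong maximum principle together with the Hopf boundary point lemma forces $w>0$ strictly with a strictly inward normal derivative, which upgrades to $u_{p,\bar\lam(p)+\varepsilon}<u$ for $\varepsilon$ small, contradicting the definition of $\bar\lam(p)$.

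Once $\bar\lam(p)=|p|$ for every $p\neq 0$, the calculus lemma of Y.-Y.\ Li (the one that converts moving-sphere information into rigid symmetry) gives that $u$ is radially symmetric about the origin, and the monotonicity $u'(r)\le 0$ for a.e.\ $r>0$ follows by specialising the inequality $u_{p,\lam}(x)\le u(x)$ to $x$ on the radius through $p$ and letting $\lam\to|p|^-$. I expect the main obstacle to be justifying the comparison principle and strong maximum principle for $w$ in the viscosity sense against the degenerate equation $\lam(-E)\in\partial\Gamma^+_k$, where the solution is only $C^{1,1}_{\text{loc}}$ and one-sided test functions have to be constructed carefully; this is precisely the delicate part of [L1], and since the conformal variation of the Einstein tensor $E_g$ differs from that of the Schouten tensor $A_g$ only by the linear relabeling of coefficients visible in the formulae of the Introduction, the argument transfers to our setting essentially verbatim.
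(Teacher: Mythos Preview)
Your proposal is correct and follows essentially the same approach as the paper: both reduce the lemma to Theorem~1.18 of \cite{L1}, observing that the conformal-change formula for $-E_g$ has the same structural features as that for the Schouten tensor (Möbius invariance of the equation and ellipticity of the second-order part $\frac{2}{n-2}u^{-1}\nabla^2-\frac{2}{n-2}u^{-1}(\Delta\,\cdot\,)g$), so the moving-spheres machinery of \cite{L1} carries over verbatim. The paper's own proof is in fact terser than yours---it simply records this ellipticity and defers entirely to \cite{L1}---whereas you have sketched the actual moving-spheres steps.
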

\begin{proof}

Note $$-E_{u^{\frac{4}{n-2}}g_{flat}}=\dfrac{2}{n-2}u^{-1}\nabla^2 u-\dfrac{2}{(n-2)}u^{-1}(\Delta u)g-\dfrac{2n}{(n-2)^2}u^{-2}du\otimes du+\dfrac{2}{(n-2)^2}u^{-2}|\nabla u|^2g.$$
The proof in Theorem 1.18 of \cite{L1} mainly depend on the ellipticity of the operator in the formula of
$A_{u^{\frac{4}{n-2}}g_{flat}}$, and in our case, the operator $\dfrac{2}{n-2}u^{-1}\nabla^2-\dfrac{2}{(n-2)}u^{-1}(\Delta )g$ is elliptic. The proof in Theorem 1.18 of \cite{L1} is still hold, we omit it here.
\end{proof}

This makes a contradiction to the second line of \eqref{irradical}, we then get $A=a$.

Now we only need to show that $a>0$. We do not use the condition $n=3$ and $k=2$ or $k=3$ untill this moment,  in the next lemma, we'll assume these conditions, and since $\Gamma_3^-\subset\Gamma_2^-$, we only need to prove the case $k=2$.

In normal coordinates at $x_i$, let $r=|x|$.
The following lemma is the key ingredient, which is a special version of Lemma 3.4 in \cite{LN}
\begin{lemma}\label{outside cone}
There exists some small $r_1>0$ depending only on $(M,g)$ such that for all $0<\delta<\frac{1}{4}$, the function
$v_{\delta}:=r^{-(1-2\delta)}e^r$ satisfies
\begin{equation}
\lam(E_{(v_{\delta})^{\frac{4}{n-2}}g})\in \mathbb{R}^n\setminus \Gamma_2^- ~~\text{in}~~\{0<r<r_1\}
\end{equation}
\end{lemma}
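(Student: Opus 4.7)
My plan is a direct asymptotic calculation of the eigenvalues of $g^{-1}E_{\tilde g}$ for $\tilde g=v_\delta^{4}g$ (recall $n=3$), aimed at establishing $\sigma_2(\lambda)<0$ near $r=0$. Because $\sigma_2$ is a sum of pairwise products, $\sigma_2(-\lambda)=\sigma_2(\lambda)$, so $\sigma_2(\lambda)<0$ places $-\lambda$ outside $\Gamma_2^+$ and hence $\lambda\notin\Gamma_2^-$, which is the desired conclusion. Write $v_\delta=e^\phi$ with $\phi(r)=-\alpha\log r+r$ and $\alpha:=1-2\delta\in(\tfrac12,1)$. Using $u=e^\phi$, the identities $u^{-1}\nabla^2 u=\nabla^2\phi+d\phi\otimes d\phi$ and $u^{-1}\Delta u=\Delta\phi+|\nabla\phi|^2$ collapse the conformal change formula from the excerpt (at $n=3$) to the clean identity
\[
E_{\tilde g}=-2\nabla^2\phi+2(\Delta\phi)\,g+4\,d\phi\otimes d\phi+E_g.
\]

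Next I work in normal coordinates at the centre, where $\phi$ is radial. The Euclidean formulas $\nabla^2_{\mathrm{Eucl}}\phi=\phi''\,dr\otimes dr+(\phi'/r)(g-dr\otimes dr)$ and $\Delta_{\mathrm{Eucl}}\phi=\phi''+2\phi'/r$ are accurate up to bounded $O(1)$ Riemannian corrections, because $\phi'=O(1/r)$ and $\phi''=O(1/r^2)$ while the Christoffel/metric perturbations contribute factors of $O(r)$ and $O(r^2)$. This lets me write $E_{\tilde g}=A(r)\,dr\otimes dr+B(r)\,g+\mathcal E$ with $\|\mathcal E\|_g\le C(M,g)$ and
\[
A=-2\phi''+4(\phi')^2+\tfrac{2\phi'}{r},\qquad B=2\phi''+\tfrac{2\phi'}{r}.
\]
The eigenvalues of $g^{-1}E_{\tilde g}$ are then the simple radial $\lambda_r=A+B$ and the double tangential $\lambda_t=B$, each up to a bounded additive error. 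Substituting $\phi'=1-\alpha/r$, $\phi''=\alpha/r^2$ gives
\[
\lambda_r=-\tfrac{4\alpha(1-\alpha)}{r^2}+\tfrac{4-8\alpha}{r}+4+O(1),\qquad \lambda_t=\tfrac{2}{r}+O(1).
\]

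Computing $\sigma_2(\lambda)=\lambda_t(2\lambda_r+\lambda_t)$ and tracking how the bounded corrections propagate through the product, one finds
\[
\sigma_2(\lambda)=-\tfrac{16\alpha(1-\alpha)}{r^3}+\tfrac{(20-32\alpha)-8\alpha(1-\alpha)\mu_t}{r^2}+O\!\left(\tfrac{1}{r}\right),
\]
where $\mu_t=O(1)$ depends only on $(M,g)$. Multiplying by $r^3$ yields a perturbed quadratic $P(r,\alpha)=-16\alpha(1-\alpha)+L(\alpha)\,r+Q\,r^2+O(r^3)$. The two endpoints of the parameter range behave favourably: at $\alpha=\tfrac12$ the constant term equals $-4$ and dominates; at $\alpha=1$ the constant vanishes, but $L(1)=-12$ (and the Riemannian correction $-8\alpha(1-\alpha)\mu_t$ to it vanishes in tandem), so the linear-in-$r$ term is strictly negative. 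A continuity/maximisation over $\alpha\in[\tfrac12,1]$ then gives a uniform $r_1=r_1(M,g)$ on which $P(r,\alpha)<0$, and so $\sigma_2(\lambda)<0$, as required.

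The chief obstacle is uniformity in $\delta$, i.e.\ preventing $r_1$ from collapsing as $\delta\to 0$ (equivalently $\alpha\to 1$), where the leading $r^{-3}$ coefficient degenerates. This is exactly where the $e^r$ factor in $v_\delta$ becomes crucial: without it one has $\lambda_t\equiv 0$ (hence $\sigma_2\equiv 0$) at leading order, and any bounded Riemannian perturbation could destroy the sign. With $e^r$ present, $\lambda_t\sim 2/r$ is positive and large, its coupling to the large negative $\lambda_r$ forces a strictly negative $\sigma_2$, and the fortunate multiplicative damping of the $r^{-2}$ correction by $\alpha(1-\alpha)$ preserves the clean value $L(1)=-12$ at the degenerate endpoint.
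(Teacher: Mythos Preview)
Your proof is correct and follows essentially the same approach as the paper: both decompose $E_{\tilde g}$ in normal coordinates into a radial-plus-tangential form with bounded remainder, identify the tangential eigenvalue as $2/r$ and the radial one as $-4\alpha(1-\alpha)/r^2+O(1/r)$, and conclude by showing $\sigma_2<0$ (the paper records this as $\sigma_2=D_1(3D_1-2D_2)<0$, which is exactly your $\lambda_t(2\lambda_r+\lambda_t)<0$). Your use of $\phi=\log v_\delta$ is a cosmetic simplification, and your explicit tracking of the $O(1)$ Riemannian errors and of the uniformity in $\delta$ as $\alpha\to 1$ makes rigorous a point the paper leaves implicit, but the underlying computation is the same.
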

\begin{proof}
Let $b=1-2\delta$, when $n=3$, the Einstein tensor for $g_\delta=v_{\delta}^{\frac{4}{n-2}}g$ reads
\begin{equation*}
\begin{split}
E_{g_\delta}&=-\dfrac{2}{n-2}v_{\delta}^{-1}\nabla^2 v_{\delta}+\dfrac{2}{(n-2)}v_{\delta}^{-1}(\Delta v_{\delta})g+\dfrac{2n}{(n-2)^2}v_{\delta}^{-2}dv_{\delta}\otimes dv_{\delta}-\dfrac{2}{(n-2)^2}v_{\delta}^{-2}|\nabla v_{\delta}|^2g+E_g\\
&=-2v_{\delta}^{-1}\nabla^2 v_{\delta}+2v_{\delta}^{-1}(\Delta v_{\delta})g+6v_{\delta}^{-2}dv_{\delta}\otimes dv_{\delta}-2v_{\delta}^{-2}|\nabla v_{\delta}|^2g+E_g\\
&=D_1 I-D_2\dfrac{x}{r}\otimes\dfrac{x}{r}+D_3
\end{split}
\end{equation*}

where $I$ is the identity matrix and
\begin{equation*}
D_1=2\dfrac{v'_{\delta}}{v_{\delta}r}-2(\dfrac{v'_{\delta}}{v_{\delta}})^2+2\dfrac{v''_{\delta}}{v_{\delta}}
=2\dfrac{r-a}{r^2}-2\dfrac{(r-a)^2}{r^2}+2\dfrac{(r-a)^2+a}{r^2}
=\dfrac{2}{r}
\end{equation*}

\begin{equation*}
D_2=2v_{\delta}^{-1}(v''_{\delta}-\dfrac{v'_{\delta}}{r})-6\dfrac{v'^2_{\delta}}{v^2_{\delta}}
=\dfrac{4(1-a)a+2(4a-1)r-4r^2}{r^2}
\end{equation*}

\begin{equation*}
|D_3|\le C(1+rv^{-1}_{\delta}|v'_{\delta}|+r^2v^{-2}_{\delta}|v'_{\delta}|^2)\le C\le CrD_1
\end{equation*}

The eigenvalue of $D_1 I-D_2\dfrac{x}{r}\otimes\dfrac{x}{r}$ with respect to $I$ are $D_1-D_2, D_1,D_1$, we can apply Lemma 2.5 to see that the eigenvalues $\lam=(\lam_1,\lam_2,\lam_3)$ of $E_{g_\delta}$ with respect to $g_\delta$ satisfies
\begin{equation*}
|\lam_1-v^{-4}_{\delta}(D_1-D_2)|+|\lam_2-v^{-4}_{\delta}D_1|+|\lam_3-v^{-4}_{\delta}D_1|\le Cv^{-4}_{\delta}\le Crv^{-4}_{\delta}D_1
\end{equation*}

We have $\sigma_1(D_1-D_2, D_1,D_1)=3D_1-D_2=\dfrac{-4(1-a)(a-2r)+4r^2}{r^2}<0$ for $a<1$ and $r$ is sufficiently small.
Then $\sigma_2(D_1-D_2, D_1,D_1)=D_1(3D_1-2D_2)<0$.
This implies $(D_1-D_2, D_1,D_1)$ lying outside of $\bar\Gamma_2^-$(also $\bar\Gamma_3^-$ immediately). Thus $\lam(E_{g_\delta})$
lies outside of $\bar\Gamma_2^-$(also $\bar\Gamma_3^-$) since $r$ is sufficiently small.
\end{proof}

From $v_i(p)=1$ and \eqref{gradient-blow-up}, there exists some positive constant $C$ independent of $i$ and $\delta$ such that $v_i\ge\dfrac{1}{C}v_{\delta}$, on $\{r=r_1\}$. For some $K=K(\delta)>0$ large enough, let
$$\bar\gamma=\sup\left\{0<\gamma<\frac{1}{C}: v_i\ge \gamma v_{\delta} ~~\text{in}~~\{Ku_i(x_i)^{-\frac{4}{n-2}<r<r_1}\}\right\}.$$
By Lemma \ref{outside cone} and the comparison principle, there exist $\hat x_i$ with $|\hat x_i|=r_i$ such that
$$v_i(\hat x_i)=\bar\gamma v_{\delta}(\hat x_i).$$
Then follow the argument in \cite{LN}, we have $\bar\gamma=\frac{1}{C}$,
This shows $v_i\ge\dfrac{1}{C}v_{\delta}\ge \dfrac{1}{Cd_g(x,x_i)^{n-2-2\delta}}$ in $\{Ku_i(x_i)^{-\frac{4}{n-2}}<r<r_1\}$ for $i\ge N$ with $N$ sufficiently large.
When $i\rightarrow \infty$, we have
$$v_\infty\ge\dfrac{1}{C}v_{\delta}\ge \dfrac{1}{Cd_g(x,x_\infty)^{n-2-2\delta}} ~~\text{in}~~ \{0<r<r_1\}$$
for all sufficiently small $\delta>0$, finally this implies $a>0$.

Now note we already get the bound $C^{-1} d_g^{2-n}(x,x_\infty)\le v_\infty(x)\le C d_g^{2-n}(x,x_\infty)$,
also note the Ricci curvarure is semi-positive definite when $n=3$ and $k=2$, we can follow the work in \cite{LN,GV2,TW1} to show $(M\setminus{x_\infty},v_\infty^{\frac{4}{n-2}}g)$ is isomorphic to $(\mathbb{R}^n,g_{flat})$ by the Bishop-Gromov comparison theorem. Then by the consequence of Gursky-Viaclovsky\cite{GV2}, this implies that $(M,g)$ is conformally equivalent to the standard sphere, which is a contradiction to the initial hypothesis. This finishes our proof of the boundness of $u$.

\end{document}